\documentclass[12pt]{amsart}
\setlength{\topmargin}{-0.5cm} \setlength{\textwidth}{15cm}
\setlength{\textheight}{22.6cm} \setlength{\topmargin}{-0.25cm}
\setlength{\headheight}{1em} \setlength{\headsep}{0.5cm}
\setlength{\oddsidemargin}{0.40cm}
\setlength{\evensidemargin}{0.40cm}
\usepackage{amscd}
\usepackage{amsmath}
\usepackage{amssymb}
\usepackage{amsthm}
\newtheorem{thm}{Theorem}[section]
\newtheorem{lemma}[thm]{Lemma}
\newtheorem{prop}[thm]{Proposition}
\newtheorem{cor}[thm]{Corollary}

\newtheorem*{fact}{Fact}
\theoremstyle{definition}

\newtheorem{exam}[thm]{Example}

\newtheorem*{acknowledgement}{Ackowledgement}
\theoremstyle{remark}
\newtheorem{remark}[thm]{Remark}
\numberwithin{equation}{section}


\DeclareMathOperator{\pd}{pd}

\DeclareMathOperator{\cohodim}{cd}

\DeclareMathOperator{\height}{height}

\DeclareMathOperator{\ara}{ara}

\DeclareMathOperator{\depth}{depth}


%
\newcommand{\frm}{{\mathfrak m}}

\begin{document}
\title[Schmitt--Vogel type lemma for reductions]
{Schmitt--Vogel type lemma for reductions}
\author[K. Kimura]{Kyouko Kimura}
\address[Kyouko Kimura]{Department of Pure and Applied Mathematics, 
         Graduate School of Information Science and Technology, 
         Osaka University, Toyonaka, Osaka 560--0043, Japan}
\email{kimura@math.sci.osaka-u.ac.jp}
\author[N. Terai]{Naoki Terai}
\address[Naoki Terai]{Department of Mathematics, 
         Faculty of Culture and Education, 
         Saga University, Saga 840--8502, Japan}
\email{terai@cc.saga-u.ac.jp}
\author[K. Yoshida]{Ken-ichi Yoshida}
\address[Ken-ichi Yoshida]{Graduate School of Mathematics, Nagoya University, 
         Nagoya 464--8602, Japan}
\email{yoshida@math.nagoya-u.ac.jp}
\date{\today}
\keywords{analytic spread, arithmetical rank, reduction  
}
\begin{abstract}
The lemma given by Schmitt and Vogel is an important tool 
in the study of arithmetical rank of squarefree monomial ideals. 
In this paper, we give a Schmitt--Vogel type lemma for reductions
as an analogous result.  
\end{abstract}
\maketitle
\setcounter{section}{-1}
\section{Introduction}
Throughout this paper, let $R$ be a commutative Noetherian ring 
with non-zero identity. 
Let $I$ be an ideal of $R$. 
Then the \textit{arithmetical rank} of $I$ is defined by 
\begin{displaymath}
  \ara I := \min \{ r \; : \; \text{there exist some $a_1, \ldots, a_r \in R$ 
    such that $\sqrt{(a_1, \ldots, a_r)} = \sqrt{I}$} \}. 
\end{displaymath}
If $\sqrt{(a_1, \ldots, a_r)} =\sqrt{I}$ holds, then we say that 
$a_1, \ldots, a_r$ generate $I$ \textit{up to radical}. 
\par 
Assume that $R$ is a polynomial ring over a field $K$ and 
$I$ is generated by squarefree monomials. 
Then we have the following inequalities: 
\begin{displaymath}
  \height I \le \pd_R R/I = \cohodim (I) 
  \le \ara I \le \mu (I), 
\end{displaymath}
where $\height I$ (resp.\  $\pd_R R/I$, $\cohodim (I)$, $\mu(I)$) denotes 
the height of $I$ (resp.\  the projective dimension of $R/I$ over $R$, 
the cohomological dimension of $I$, the minimal number of generators of $I$); 
see e.g. \cite{KTY}.  
Many researchers, e.g.  Barile \cite{Bar96, Bar05, Bar07, Bar08-3, Bar0611}, 
Schmitt and Vogel \cite{SV} 
and the authors  \cite{KTY, KTY2}
have proved $\ara I = \pd_R R/I$  
using the following lemma given by Schmitt and Vogel \cite{SV} or 
its generalizations. 

\begin{fact}[\textbf{Schmitt and Vogel} \textrm{\cite[Lemma, p.\  249]{SV}}]
Let $\mathcal{P}$ be a finite subset of $R$, 
and let $I$ be the ideal generated by $\mathcal{P}$.  
Let $r \ge 0$ be an integer. 
Assume that there exist subsets 
${\mathcal{P}}_0, {\mathcal{P}}_1, \ldots, {\mathcal{P}}_r$ of $\mathcal{P}$ 
such that the following conditions are satisfied$:$ 
\begin{enumerate}
 \item[(i)] $\mathcal{P}={\mathcal{P}}_0 \cup {\mathcal{P}}_1 \cup \cdots \cup {\mathcal{P}}_r$. 
 \item[(ii)] $\sharp {\mathcal{P}}_0 = 1$. 
 \item[(iii)] For each $\ell$ $(0 < \ell \leq r)$ 
    and for every $a, a'' \in {\mathcal{P}}_{\ell}$ with $a \neq a''$, 
    there exist an integer ${\ell}'$ 
    $(0 \leq {\ell}' < \ell)$, and elements $a' \in {\mathcal{P}}_{{\ell}'}$,  
    such that $a a'' \in (a')$. 
\end{enumerate}
If we set
  \begin{displaymath}
    g_{\ell} = \sum_{a \in {\mathcal{P}}_{\ell}} a, 
    \quad \ell = 0, 1, \ldots, r,  
  \end{displaymath}
  then $\sqrt{I} = \sqrt{(g_0,g_1,\ldots,g_r)}$. 
\end{fact}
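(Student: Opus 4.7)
The plan is to prove the two inclusions $\sqrt{(g_0,\ldots,g_r)} \subseteq \sqrt{I}$ and $\sqrt{I} \subseteq \sqrt{(g_0,\ldots,g_r)}$ separately. The first inclusion is immediate: each $g_\ell$ is, by definition, a sum of elements of $\mathcal{P}$, so $g_\ell \in I$ and hence $(g_0,\ldots,g_r) \subseteq I$. The entire content of the lemma therefore lies in the reverse inclusion, for which it suffices to show that every element $a \in \mathcal{P}$ belongs to $\sqrt{J}$, where we write $J := (g_0,g_1,\ldots,g_r)$.

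The key step is an induction on $\ell$, where we prove the statement ``every $a \in \mathcal{P}_\ell$ lies in $\sqrt{J}$.'' The base case $\ell = 0$ is handled by condition (ii): if $\mathcal{P}_0 = \{a_0\}$, then $g_0 = a_0$, which is trivially in $J \subseteq \sqrt{J}$. For the inductive step, fix $\ell \geq 1$ and $a \in \mathcal{P}_\ell$, and consider the identity
\begin{equation*}
  a \cdot g_\ell \;=\; a^2 \;+\; \sum_{\substack{a'' \in \mathcal{P}_\ell \\ a'' \neq a}} a a''.
\end{equation*}
By condition (iii), each cross term $a a''$ lies in an ideal $(a')$ with $a' \in \mathcal{P}_{\ell'}$ for some $\ell' < \ell$. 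By the inductive hypothesis, each such $a'$ lies in $\sqrt{J}$, so $a a'' \in \sqrt{J}$, and therefore
\begin{equation*}
  a^2 \;=\; a\cdot g_\ell \;-\; \sum_{a'' \neq a} a a'' \;\in\; \sqrt{J},
\end{equation*}
from which $a \in \sqrt{J}$ follows at once. Assembling condition (i), this gives $\mathcal{P} \subseteq \sqrt{J}$, hence $I \subseteq \sqrt{J}$, and finally $\sqrt{I} \subseteq \sqrt{J}$.

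There is no real obstacle; the only point that requires care is bookkeeping the order of induction so that condition (iii), which demands the auxiliary element $a'$ to live in an \emph{earlier} $\mathcal{P}_{\ell'}$, can be invoked. The strict inequality $\ell' < \ell$ is precisely what makes the induction well-founded and what prevents any circular dependence in the argument.
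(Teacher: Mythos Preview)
Your argument is correct and is essentially the classical proof of the Schmitt--Vogel lemma: the easy inclusion $(g_0,\ldots,g_r)\subseteq I$ is immediate, and the reverse inclusion at the level of radicals follows by induction on $\ell$, using the identity $a^2 = a g_\ell - \sum_{a''\neq a} aa''$ together with condition~(iii) to push each cross term into $\sqrt{J}$ via the induction hypothesis. There is no gap.

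Note, however, that the paper does \emph{not} give its own proof of this Fact; it is quoted from \cite{SV} as background and motivation. What the paper actually proves is the reduction analogue (Theorem~\ref{claim:SV_red}), whose hypothesis~(SV3) is stronger than~(iii) (it requires $aa''=a'b$ with $b\in I$, not merely $aa''\in(a')$) and whose conclusion is correspondingly stronger ($J$ is a reduction of $I$, not just $\sqrt{J}=\sqrt{I}$). The proof there, carried out via Theorem~\ref{claim:Bar07_red}, is quite different in flavor from yours: rather than working inside $\sqrt{J}$, it establishes a chain of containments of the form $I_j^{N_j}\subseteq I_{j-1}^{N_{j-1}}I^{\,\cdots}+JI^{\,\cdots}$ with explicit exponents, ultimately yielding $I^{s+1}\subseteq JI^s$. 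Your radical argument would not suffice for that sharper conclusion, and conversely the paper's quantitative machinery is overkill if one only wants the Fact as stated.
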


\par \vspace{2mm}
An ideal $J \subset I$ is said to be a \textit{reduction} if there exists 
some integer $s \geq 1$ such that $I^{s+1} = JI^{s}$ holds. 
When this is the case, $\sqrt{J} = \sqrt{I}$ holds. 
If $J$ is minimal among reductions of $I$ with respect to inclusion, then it is said to be a 
\textit{minimal reduction} of $I$. 
Let $R$ be a polynomial ring over a field $K$ and $I$ a homogeneous ideal of $R$, 
or let $R$ be a local ring with unique maximal ideal $\frm$ and $K=R/\frm$ and 
$I$ an ideal of $R$.  
If $K$ is infinite, then for any (homogeneous) ideal $I$, 
we can take a minimal reduction $J$ of $I$  
and the minimal number of generators of $J$ 
is independent of the choice of $J$; see  \cite{NorthRees}. 
The number of generators of $J$ is called the \textit{analytic spread} of $I$ 
(denoted by $\ell(I)$) and it gives   
an upper bound for $\ara I$. 
In the commutative ring theory, the minimal reduction 
plays an important role because it admits the same integral closure 
as the original ideal.  
Moreover, the analytic spread is equal to the Krull dimension of 
the fiber cone $F(I)= \bigoplus_{n \ge 0} I^n/\frm I^n$ of $I$ 
in a local ring $(R,\frm)$, and hence it is an important invariant. 

\par \vspace{2mm}     
The main purpose of this note is to give an analogous result 
of the lemma due to Schmitt and Vogel \cite[Lemma, p.\ 249]{SV} 
for reductions; see Theorem \ref{claim:SV_red}.   
For instance, let us consider the following monomial ideal 
in a suitable polynomial ring $R$: 
\begin{equation}
  \label{eq:ci_dual_intro}
  I = (x_{11}, \ldots, x_{1 h_1}) 
      \cap \cdots \cap (x_{q1}, \ldots, x_{q h_q}).  
\end{equation}
In order to give an upper bound for $\cohodim (I)$, 
Schenzel and Vogel \cite{Sche-Vo} computed $\depth R/I^{\ell}$ for all $\ell \ge 1$, 
and proved 
\[
 \cohodim (I) \le \ell(I) \le \depth R - \inf_{\ell} \depth R/I^{\ell} 
= \sum_{i=1}^q h_i -q +1 \ \big(= \pd_R R/I\big), 
\]
where the second inequality is known as Burch's inequality. 
On the other hand,  Schmitt and Vogel \cite{SV} constructed $\pd_R R/I$ generators 
up to radical using their lemma. 
By using Theorem  \ref{claim:SV_red}  instead of their lemma, we can provide a minimal reduction 
with $\pd_R R/I$ generators; see Example \ref{claim:ci_dual}.

\par 
In Section 2, we prove the main theorem. 
In order to do that, we give analogous results 
(see Theorems \ref{claim:Bar07_red}, \ref{claim:Bar96_red})
of generalizations of the lemma due to Barile for reductions,
and prove them.

\section{Schmitt--Vogel type lemma for reductions and its application}
\label{sec:SV_red}

The following theorem is the main result in this paper, 
which gives an analogous result of \cite[Lemma, p.\  249]{SV} proved by Schmitt and Vogel. 
Note that the theorem immediately follows from 
Theorem \ref{claim:Bar07_red}, which will be proved in Section \ref{sec:Bar_red}. 

\begin{thm}[\textbf{Schmitt--Vogel type lemma for reductions}]
  \label{claim:SV_red}
Let $\mathcal{P}$ be a finite subset of $R$, and let $I$ be the ideal generated by $\mathcal{P}$.  
Let $r \ge 0$ be an integer. 
Assume that there exist subsets 
${\mathcal{P}}_0, {\mathcal{P}}_1, \ldots, {\mathcal{P}}_r$ of $\mathcal{P}$ 
such that the following conditions are satisfied$:$ 
\begin{enumerate}
 \item[(SV1)] $\mathcal{P}={\mathcal{P}}_0 \cup {\mathcal{P}}_1 \cup \cdots \cup {\mathcal{P}}_r$. 
 \item[(SV2)] $\sharp {\mathcal{P}}_0 = 1$. 
 \item[(SV3)] For each $\ell$ $(0 < \ell \leq r)$ 
    and for every $a, a'' \in {\mathcal{P}}_{\ell}$ with $a \neq a''$, 
    there exist an integer ${\ell}'$ 
    $(0 \leq {\ell}' < \ell)$, and elements $a' \in {\mathcal{P}}_{{\ell}'}$,  
    $b \in I$ such that $a a'' = a' b$. 
\end{enumerate}
If we set
  \begin{displaymath}
    g_{\ell} = \sum_{a \in {\mathcal{P}}_{\ell}} a, 
    \quad \ell = 0, 1, \ldots, r,  
  \end{displaymath}
  then $J = (g_0, g_1, \ldots, g_r)$ is a reduction of $I$. 
\end{thm}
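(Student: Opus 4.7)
The plan is to exhibit some $s \ge 1$ with $I^{s+1} \subseteq J \cdot I^s$ (the reverse inclusion $J \cdot I^s \subseteq I^{s+1}$ being automatic from $J \subseteq I$), which is the definition of $J$ being a reduction of $I$. I would aim for $s = r$. Throughout, write $I_\ell := \langle \mathcal{P}_0 \cup \cdots \cup \mathcal{P}_\ell\rangle$, so $I_0 = (g_0)$, $I_r = I$, and $g_\ell \in J$ for every $\ell$.

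The first step is to extract two consequences of (SV3). For $\ell \ge 1$ and distinct $a, a'' \in \mathcal{P}_\ell$, one has $aa'' \in I_{\ell-1}\cdot I$; and using $a\, g_\ell = a^2 + \sum_{c \in \mathcal{P}_\ell,\, c\ne a} ac$ combined with the previous fact, one derives the square identity
\[
 a^2 \in J\cdot I + I_{\ell-1}\cdot I \qquad (a \in \mathcal{P}_\ell),
\]
which aggregates to $I_k^2 \subseteq J\cdot I + I_{k-1}\cdot I$ for each $k \ge 1$. The second step is a pigeonhole reduction: given a generating monomial $\pi = p_1\cdots p_{r+1}$ of $I^{r+1}$, assign each $p_i$ a level $\ell_i$ with $p_i \in \mathcal{P}_{\ell_i}$. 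If some $\ell_i = 0$ then $p_i = g_0 \in J$ and immediately $\pi \in J \cdot I^r$; otherwise the $r+1$ labels live in the $r$-element set $\{1,\ldots,r\}$, forcing two labels to coincide at some level $\ell \ge 1$, and applying (SV3) (for the distinct case) or the square identity (for the equal case) to that pair yields $\pi \in J\cdot I^r + I_{\ell-1}\cdot I^r$.

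To absorb the residual $I_{\ell-1}\cdot I^r$, I would prove by strong induction on $k$ the auxiliary statement $I_k \cdot I^r \subseteq J\cdot I^r$ for $0 \le k \le r$. The base case $k = 0$ is immediate from $I_0 = (g_0) \subseteq J$. For the inductive step, run the same pigeonhole analysis on products $y \cdot x_1\cdots x_r$ with $y \in \mathcal{P}_k$ and $x_i \in \mathcal{P}$; whenever the matched pair lies at a level $\ell' \le k$, the inductive hypothesis supplies $I_{\ell'-1}\cdot I^r \subseteq I_{k-1}\cdot I^r \subseteq J\cdot I^r$. Taking $k = r$ (so $I_k \cdot I^r = I^{r+1}$) then combines with Step 2 to complete the argument.

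The hard step is the case $\ell' > k$ in the induction: the naive residue $I_{\ell'-1}\cdot I^r$ has $\ell'-1 \ge k$, so the inductive hypothesis does not apply directly. The resolution is to choose the SV-pair at the lowest matched level and to invoke the square identity $I_{\ell'-1}^2 \subseteq J\cdot I + I_{\ell'-2}\cdot I$ to rewrite $y \cdot (\text{matched residue})$, strictly descending the filtration level. This bookkeeping is precisely what the more flexible Barile-type Theorem \ref{claim:Bar07_red} is designed to handle uniformly; the authors reduce Theorem \ref{claim:SV_red} to that result, which sidesteps the intricate case analysis needed in a direct proof.
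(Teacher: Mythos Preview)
Your final sentence is exactly the paper's proof: one observes that (SV3) with $m=1$ gives (B3), and Theorem~\ref{claim:Bar07_red} then yields the reduction. That single implication is the entire argument the authors supply, so if that is your proof, it is correct and identical to theirs.

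The preceding direct attempt, however, has a genuine gap. In your induction on $k$ proving $I_k\cdot I^r\subseteq J\cdot I^r$, the ``hard step'' is not repaired by the mechanism you sketch. Suppose $y\in\mathcal P_k$ and the only matched pair among $y,x_1,\dots,x_r$ sits at some level $\ell'>k$. Reducing that pair puts the product in $J I^r + y\cdot I_{\ell'-1}\cdot I^{r-1}$; using $y\in I_k\subseteq I_{\ell'-1}$ and $I_{\ell'-1}^2\subseteq JI+I_{\ell'-2}I$ then lands you in $J I^r + I_{\ell'-2}\cdot I^r$. But $\ell'-2$ need not be below $k$: already for $\ell'=k+2$ you get $\ell'-2=k$, which is the very statement you are trying to prove, and for $\ell'>k+2$ you obtain a statement with a \emph{higher} index than $k$, so the strong induction hypothesis is unavailable. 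No invariant you have introduced strictly decreases, so the loop does not close.

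Relatedly, the choice $s=r$ is not what the paper's machinery delivers. Theorem~\ref{claim:Bar07_red} (with all $m_\ell=1$, which is the SV case) establishes $I^{s+1}\subseteq JI^s$ for $s=c_1\cdots c_r-1$, where $c_\ell=\sharp\mathcal P_\ell$; its proof is organised by partitioning a length-$c_1\cdots c_r$ product into $c_1\cdots c_{\ell-1}$ blocks of length $c_\ell$ and, within each block, expanding a maximal-multiplicity factor via $a_1^{d}=a_1^{1}(g_\ell-\sum_{i\ge2}a_i)^{d-1}$. That block structure, not a pigeonhole match on $r+1$ factors, is what makes the descent terminate. If you want a self-contained proof rather than a deferral, you should either reproduce that block argument or find a different invariant that genuinely decreases in your scheme.
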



\par 
We now restrict our attention to the following case: 
$R$ is a polynomial ring over a field $K$ 
and $I$ is a squarefree monomial ideal of $R$. 
In this case, 
as an application of the above theorem, we have the following result. 

\begin{cor}
\label{claim:squareSV_red}
Let $R$ be a polynomial ring and $I$ a squarefree monomial ideal of $R$. 
Assume that there exist finite subsets $\mathcal{P}_0,\ldots,\mathcal{P}_r$ of $R$ 
satisfying the assumptions in Theorem $\ref{claim:SV_red}$ for $r = \pd_R R/I-1$.  
Then $(g_0,g_1,\ldots,g_r)$ is a minimal reduction of $I$, and 
$\ell(I) = \ara I = \pd_R R/I =r+1$. 
\end{cor}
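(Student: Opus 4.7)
The plan is to apply Theorem \ref{claim:SV_red} to produce a reduction with $r+1$ generators, then use the chain of inequalities from the introduction to pin each of $\ara I$ and $\ell(I)$ to $r+1$, and finally to argue that a reduction attaining the analytic spread is a minimal reduction.

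First, I would apply Theorem \ref{claim:SV_red} directly: since hypotheses (SV1)--(SV3) are assumed, the ideal $J := (g_0, g_1, \ldots, g_r)$ is a reduction of $I$, so in particular $\sqrt{J} = \sqrt{I}$, giving $\ara I \le r+1$. Combining with the inequality $\pd_R R/I \le \ara I$ recalled in the introduction for squarefree monomial ideals, and with the hypothesis $r+1 = \pd_R R/I$, we obtain
\[
  \pd_R R/I \;=\; r+1 \;\le\; \ara I \;\le\; r+1,
\]
so $\ara I = \pd_R R/I = r+1$.

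Next, for the analytic spread I would use two standard facts: any reduction of $I$ requires at least $\ell(I)$ generators (because the images of a minimal generating set of $J$ in the fiber cone $F(I)_1 = I/\frm I$ generate $F(I)$ as a $k$-algebra, so $\dim F(I) \le \mu(J)$), and any minimal reduction generates $I$ up to radical, giving $\ara I \le \ell(I)$. Thus
\[
  r+1 \;=\; \ara I \;\le\; \ell(I) \;\le\; \mu(J) \;\le\; r+1,
\]
forcing $\ell(I) = \mu(J) = r+1$, so $g_0,\ldots,g_r$ form a minimal generating set of $J$.

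Finally, to conclude that $J$ is itself a minimal reduction, I would appeal to the standard characterization (valid in the graded setting over an infinite residue field) that a reduction $J$ of $I$ is minimal if and only if $\mu(J) = \ell(I)$. Equivalently, the images of $g_0, \ldots, g_r$ in $F(I)_1$ are $\dim F(I)$ elements generating $F(I)$ as a $k$-algebra, hence form a homogeneous system of parameters and are analytically independent in $I$, which is precisely the condition characterizing minimal reductions. Since the entire argument is just squeezing known invariants between the upper bound $r+1$ coming from the explicit construction and the lower bound $r+1 = \pd_R R/I$, there is no real obstacle; all the nontrivial content is already carried by Theorem \ref{claim:SV_red}.
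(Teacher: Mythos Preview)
Your proposal is correct and follows essentially the same approach as the paper: both squeeze $\ara I$ and $\ell(I)$ between the lower bound $r+1=\pd_R R/I$ (from the chain of inequalities in the introduction) and the upper bound $r+1$ coming from the explicit reduction $J=(g_0,\dots,g_r)$ produced by Theorem~\ref{claim:SV_red}. The paper's proof is simply terser, writing the single chain $r+1=\pd_R R/I=\cohodim(I)\le \ara I\le \ell(I)\le r+1$ and leaving the minimality of $J$ implicit, whereas you spell out the $\mu(J)=\ell(I)$ step.
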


\begin{proof}
Since $I$ is a squarefree monomial ideal, 
we have 
\[
 r+1=\pd_R R/I = \cohodim(I) \le \ara I \le \ell(I). 
\]
On the other hand, Theorem \ref{claim:SV_red} implies  
$\ell(I) \le r+1$. 
Hence we get the desired assertion. 
\end{proof}

\par \vspace{2mm}
We can apply our results to Alexander dual of complete intersection monomial ideals; see below.

\begin{exam}[\textbf{Alexander dual of complete intersection monomial ideals}]
  \label{claim:ci_dual}
Let $I \subseteq R$ be a squarefree monomial ideal of the following shape:  
\begin{equation}
  \label{eq:ci_dual}
  (x_{11}, \ldots, x_{1 h_1}) 
    \cap \cdots \cap (x_{q1}, \ldots, x_{q h_q}),    
\end{equation}
where $R=K[x_{11}, \ldots, x_{1 h_1},\ldots,x_{q1}, \ldots, x_{q h_q}]$ is a polynomial ring 
over a field $K$.  
Note that $I$ can be regarded as the Alexander dual of complete intersection 
monomial ideal $(x_{11}\cdots x_{1h_1},\ldots,x_{q1}\cdots x_{qh_q})$ if $h_1,\ldots,h_q \ge 2$. 
\par 
Set $r = h_1+\cdots + h_q - q$ and 
  \begin{displaymath}
    g_{\ell} = \sum_{{\ell}_1 + \cdots + {\ell}_q = \ell} 
      x_{1 {\ell}_1} x_{2 \ell_2} \cdots x_{q {\ell}_q}, \quad \ell = 0, 1, \ldots, r. 
  \end{displaymath}
  Then $(g_0, g_1, \ldots, g_r)$ is a minimal reduction of $I$. 
  In particular, 
  \begin{displaymath}
    \ell (I) = \ara I = \pd_R R/I = \sum_{i=1}^q h_i - q +1. 
  \end{displaymath}
\end{exam}

\begin{proof}
It is known that 
  \begin{displaymath}
     r+1 = \pd_R R/I = \ara I \le \ell(I);
  \end{displaymath}
see e.g. \cite[Theorem]{SV} or \cite[Section 5]{KTY}. 
\par 
For each $\ell =0,1,\ldots, r$, we set 
  \begin{displaymath}
    {\mathcal{P}}_{\ell} 
    = \left\{ x_{1 {\ell}_1} \cdots x_{q {\ell}_q} \; : \; 
      1 \leq {\ell}_j \le h_j, \; {\ell}_1+\cdots + {\ell}_q 
      = \ell +q \right\}.
  \end{displaymath}
Then $I$ is generated by all monomials in $\mathcal{P}_0 \cup \cdots \cup \mathcal{P}_r$, 
and $\mathcal{P}_0$ consists of only one element $x_{11}\cdots x_{q1}$. 
Thus it suffices to show that if $a$, $a'' \in {\mathcal{P}}_{\ell}$ 
  with $a \neq a''$ then there exist $a' \in {\mathcal{P}}_{\ell'}$ 
  for some $\ell'< \ell$  and $b \in I$ such that $aa'' = a'b$. 
  Write 
  \begin{displaymath}
  a = x_{1 i_1} x_{2 i_2} \cdots x_{q i_q}, \qquad 
  a'' = x_{1 j_1} x_{2 j_2}\cdots x_{q j_q}, 
  \end{displaymath}
  where $i_1+ \cdots + i_q = j_1 + \cdots + j_q = \ell +q$. 
  As $a \neq a''$, there exists an integer $k$ $(1 \le k \le q)$ 
  such that $i_k > j_k$. 
  We may assume that $k=1$ without loss of generality. 
  Then if we set 
  \begin{displaymath}
    a'  = a \cdot \frac{x_{1j_1}}{x_{1i_1}}
       = x_{1j_1}x_{2i_2}\cdots x_{qi_q}, 
    \qquad 
    b = a'' \cdot \frac{x_{1i_1}}{x_{1j_1}}
      = x_{1i_1} x_{2j_2} \cdots x_{qj_q} 
    \in I,
  \end{displaymath}
  then $aa'' = a'b$ and $a' \in \mathcal{P}_{\ell'}$, where 
  \begin{displaymath}
    \ell' = j_1 + i_2+\cdots + i_q -q < i_1+ i_2 + \cdots + i_q -q = \ell.
  \end{displaymath}
  Hence we can apply Corollary \ref{claim:squareSV_red}.
\end{proof}

\begin{remark}
  \label{rmk:SV_ara}
We use the same notation as in Example \ref{claim:ci_dual}. 
Schmitt and Vogel \cite{SV} proved $\ara I = \pd_R R/I$ 
by showing $\sqrt{(g_0, g_1, \ldots, g_r)} = \sqrt{I}$. 
Thus the above example gives an improvement of their result. 
\end{remark}

\par
We can generalize Example \ref{claim:ci_dual} as follows.  

\begin{prop}
  \label{claim:ci_dual+}
For each $i=1,2,\ldots,s$, 
let $I_i$ be a squarefree monomial ideal of the shape 
 $(\ref{eq:ci_dual})$:
\begin{displaymath}
    I_i = (x_{11}^{(i)}, \ldots, x_{1 h_1^{(i)}}^{(i)}) \cap \cdots \cap 
          (x_{q^{(i)} 1}^{(i)}, \ldots, x_{q^{(i)} h_{q^{(i)}}^{(i)}}^{(i)}). 
\end{displaymath}
Let $G(I_i)$ be the minimal set 
of monomial generators of $I_i$. 
Suppose that there are no variables which appear in both 
$G (I_i)$ and $G (I_j)$ 
for each $i, j$ with $i \neq j$. 
For $I_i$, set $g_{\ell}^{(i)}$ as in Example $\ref{claim:ci_dual}$. 
 Then 
\[
 (g_{\ell}^{(i)} \,:\, i=1,\ldots,s, \ell = 0,1,\ldots, h_1^{(i)}+\cdots+ h_{q^{(i)}}^{(i)}-q^{(i)})
\] 
is a minimal reduction of $I_1 + \cdots + I_s$. 
In particular, $\ell(I_1+\cdots +I_s) = \ell(I_1)+\cdots + \ell(I_s)$. 
\end{prop}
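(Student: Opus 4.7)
The plan is to apply Example \ref{claim:ci_dual} to each $I_i$ individually, then promote the pieces to a reduction of $I_1+\cdots+I_s$ by a pigeonhole argument, and finally pin down the analytic spread by a projective dimension computation that exploits the disjoint-variables hypothesis.

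First, I would apply Example \ref{claim:ci_dual} to each $I_i$ separately. Setting $r_i := h_1^{(i)}+\cdots+h_{q^{(i)}}^{(i)}-q^{(i)}$, this gives that $J_i := (g_0^{(i)},\ldots,g_{r_i}^{(i)})$ is a minimal reduction of $I_i$ and $\ell(I_i) = r_i+1$; in particular there is an integer $t_i \ge 1$ with $I_i^{t_i+1} = J_i I_i^{t_i}$.

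Second, set $I := I_1+\cdots+I_s$ and $J := J_1+\cdots+J_s$. Expanding
\[
I^N = \sum_{a_1+\cdots+a_s=N} I_1^{a_1}\cdots I_s^{a_s},
\]
and taking $T := \max_i t_i$ and $N := sT+1$, pigeonhole forces some $a_i \ge T+1 \ge t_i+1$ in each summand, so $I_i^{a_i} = J_i I_i^{a_i-1}$ and the summand lies in $J \cdot I^{N-1}$. Hence $I^N = J I^{N-1}$, so $J$ is a reduction of $I$ generated by $\sum_{i=1}^s(r_i+1)$ elements; in particular $\ell(I) \le \mu(J) \le \sum_{i=1}^s(r_i+1)$.

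Third, I would match this with a lower bound coming from the projective dimension. Because the variable sets appearing in distinct $I_i$'s are pairwise disjoint, letting $R_i$ denote the polynomial subring generated by the variables of $G(I_i)$, we have $R/I \cong (R_1/I_1)\otimes_K\cdots\otimes_K(R_s/I_s)$. Tensoring the minimal graded free resolutions of the factors over each $R_i$ produces a minimal graded free resolution of $R/I$ over $R$, so
\[
\pd_R R/I = \sum_{i=1}^s \pd_{R_i} R_i/I_i = \sum_{i=1}^s (r_i+1)
\]
by Example \ref{claim:ci_dual}. Since $I$ is a squarefree monomial ideal, the chain of inequalities in the introduction gives $\pd_R R/I \le \ell(I)$; combining with the bound from the second step forces equality throughout. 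Thus $\ell(I) = \sum_{i=1}^s \ell(I_i)$ and $J$ is a minimal reduction of $I$.

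The main obstacle will be the second step: the passage from reductions $J_i$ of each $I_i$ to a reduction $J_1+\cdots+J_s$ of $I_1+\cdots+I_s$ is not automatic for ideals in general, and has to be effected by the concrete pigeonhole bound $N=sT+1$. Interestingly, this step does not use the disjoint-variables hypothesis; disjointness becomes essential only in the K\"unneth-type projective dimension computation of the third step, which is what supplies the matching lower bound for $\ell(I)$.
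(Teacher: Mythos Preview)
Your proof is correct, but it diverges from the paper's in how the reduction property of $J=J_1+\cdots+J_s$ is established. The paper does not go through a pigeonhole argument on the multinomial expansion of $I^N$; instead it observes (Lemma~\ref{claim:sum_ideal}(2)) that the Schmitt--Vogel data $\mathcal{P}_0^{(1)},\ldots,\mathcal{P}_{r_1}^{(1)},\mathcal{P}_0^{(2)},\ldots,\mathcal{P}_{r_s}^{(s)}$ can simply be concatenated and still satisfy (SV1)--(SV3) for $I_1+\cdots+I_s$, so Theorem~\ref{claim:SV_red} (via Corollary~\ref{claim:squareSV_red}) applies directly to the sum. Your third step---the K\"unneth-type computation $\pd_R R/I=\sum_i \pd_{R_i} R_i/I_i$ from the tensor product of minimal free resolutions---is exactly the content of Lemma~\ref{claim:sum_ideal}(1), so the two proofs coincide there.

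What each approach buys: the paper's route keeps everything inside the Schmitt--Vogel framework, so the minimality conclusion comes for free from Corollary~\ref{claim:squareSV_red}. Your pigeonhole argument, on the other hand, isolates a clean general lemma---if each $J_i$ is a reduction of $I_i$ then $\sum_i J_i$ is a reduction of $\sum_i I_i$---which holds for arbitrary ideals in any Noetherian ring and does not rely on the Schmitt--Vogel conditions at all. As you correctly point out, neither argument needs the disjoint-variables hypothesis for the reduction step; that hypothesis is consumed only by the projective-dimension lower bound.
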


\par
In order to prove Proposition \ref{claim:ci_dual+}, it is enough to show the following lemma. 

\begin{lemma}
\label{claim:sum_ideal}
Let $R$, $S$ be polynomial rings over a field $K$ with no common variables, 
and put $T=R \otimes_K S$. 
Let $I \subseteq R$ $($resp.\  $J \subseteq S$ $)$ be a squarefree monomial ideal.    
Then:
\begin{enumerate}
 \item $\pd_T T/(IT+JT) = \pd_{R} R/I + \pd_{S} S/J$. 
 \item Assume that $\mathcal{P}_0,\mathcal{P}_1, \ldots,\mathcal{P}_{r} \subseteq R$  
$($resp.\  $\mathcal{Q}_0,\mathcal{Q}_1,\ldots,\mathcal{Q}_{s} \subseteq S$ $)$ 
satisfies $(SV1)$, $(SV2)$ and $(SV3)$ in Theorem $\ref{claim:SV_red}$.   
Then 
$\mathcal{P}_0,\mathcal{P}_1,\ldots,\mathcal{P}_{r}, \mathcal{Q}_0,\mathcal{Q}_1,\ldots,\mathcal{Q}_{s}$  
also satisfies the same conditions as finite subsets of $T$. 
\end{enumerate} 
\end{lemma}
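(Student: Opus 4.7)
The plan is to prove (1) via a tensor-product-of-resolutions argument and (2) by concatenating the two Schmitt--Vogel families into one over $T$.

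For (1), I begin with the canonical isomorphism $T/(IT+JT) \cong (R/I) \otimes_K (S/J)$, which holds because $T = R \otimes_K S$ and $R$, $S$ share no variables. Let $F_\bullet \to R/I$ and $G_\bullet \to S/J$ be minimal graded free resolutions of lengths $p := \pd_R R/I$ and $q := \pd_S S/J$. Since $R \to T$ and $S \to T$ are flat (in fact free), $F_\bullet \otimes_K G_\bullet$ is a complex of free $T$-modules, and by the K\"unneth formula over the field $K$,
\[
H_n(F_\bullet \otimes_K G_\bullet) \;\cong\; \bigoplus_{i+j=n} H_i(F_\bullet) \otimes_K H_j(G_\bullet),
\]
which vanishes for $n \geq 1$ and equals $R/I \otimes_K S/J \cong T/(IT+JT)$ for $n=0$. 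Hence this is a free $T$-resolution of $T/(IT+JT)$ of length $p+q$, giving $\pd_T T/(IT+JT) \leq p+q$. Because the entries of the differentials of $F_\bullet$ and $G_\bullet$ lie in the homogeneous maximal ideals of $R$ and $S$ respectively, those of $F_\bullet \otimes_K G_\bullet$ lie in the homogeneous maximal ideal of $T$, so this resolution is itself minimal and equality follows.

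For (2), I reindex the union as $\mathcal{R}_0, \mathcal{R}_1, \ldots, \mathcal{R}_{r+s+1}$ by setting $\mathcal{R}_\ell := \mathcal{P}_\ell$ for $0 \leq \ell \leq r$ and $\mathcal{R}_{r+1+\ell} := \mathcal{Q}_\ell$ for $0 \leq \ell \leq s$. Then (SV1) is immediate since $\mathcal{P} \cup \mathcal{Q}$ generates $IT + JT$, and (SV2) holds since $|\mathcal{R}_0| = |\mathcal{P}_0| = 1$. To verify (SV3), fix distinct $a, a'' \in \mathcal{R}_\ell$ with $\ell > 0$. If $1 \leq \ell \leq r$, apply (SV3) for $\mathcal{P}_\bullet$ in $R$ to obtain $\ell' < \ell$, $a' \in \mathcal{P}_{\ell'} = \mathcal{R}_{\ell'}$, and $b \in I \subseteq IT + JT$ with $aa'' = a'b$. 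If $\ell = r+1$ then $\mathcal{R}_{r+1} = \mathcal{Q}_0$ has cardinality one and no such pair exists. If $\ell > r+1$, apply (SV3) for $\mathcal{Q}_\bullet$ in $S$ to get $\ell'' < \ell - r - 1$, $a' \in \mathcal{Q}_{\ell''} = \mathcal{R}_{r+1+\ell''}$, and $b \in J \subseteq IT + JT$ with $aa'' = a'b$; note that $r+1+\ell'' < \ell$.

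The main obstacle lies in the equality in (1): showing that the K\"unneth-style resolution is minimal, so that the dimension count $p+q$ is the exact value of $\pd_T T/(IT+JT)$ rather than merely an upper bound. In the standard graded setting this follows from the bigrading on $T$ inherited from $R$ and $S$, which forces the entries of the tensor-product differentials into the homogeneous maximal ideal of $T$. Once this is in hand, part (2) is essentially bookkeeping once the correct linear ordering of the subsets is chosen.
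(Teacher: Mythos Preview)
Your proof is correct and follows the same approach as the paper: for (1) the paper also takes minimal free resolutions $F_\bullet$ and $G_\bullet$ and asserts that $F_\bullet \otimes_K G_\bullet$ is a minimal free resolution of $T/(IT+JT)$, while for (2) it simply says the claim is clear by definition. You have merely filled in the details the paper omits---the K\"unneth justification for exactness, the minimality argument via entries lying in the graded maximal ideal, and the explicit reindexing and case check for (SV1)--(SV3).
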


\begin{proof}
(1) Let $F_{\bullet}$ (resp.\  $G_{\bullet}$) be a minimal free resolution 
of $R/I$ over $R$ (resp.\  $S/J$ over $S$). 
Then $F_{\bullet} \otimes_K G_{\bullet}$ is a minimal free resolution of $T/(IT+JT)$.  
Thus we have $\pd_T T/(IT+JT) = \pd_{R} R/I + \pd_{S} S/J$. 
\par \vspace{2mm}
(2) It is clear by definition. 
\end{proof}

\begin{remark}
We use the same notation as in Lemma \ref{claim:sum_ideal}.  
Then it is easy to see that $\ara(IT+JT) \le \ara I + \ara J$ holds. 
If both $\ara I = \pd_R R/I$ and $\ara J = \pd_S S/J$ hold, then 
the equality holds. 
But we do \textit{not} know whether it is always true. 
Moreover, it seems that a similar result holds for analytic spreads, 
but we do \textit{not} have any proof in general. 
\end{remark}

\section{Proof of the theorem}
\label{sec:Bar_red}

In this section, we prove Theorem \ref{claim:SV_red}, which is an analogous 
result of the lemma by Schmitt--Vogel for reductions.  
But the lemma has been generalized by Barile \cite{Bar96, Bar07}; 
see also \cite{Bar0611} for another version.  
So, in this section, we prove analogous results 
for two generalizations by Barile; see Theorems \ref{claim:Bar07_red}, 
\ref{claim:Bar96_red}.

%

\par \vspace{2mm}
The following theorem gives an analogous result for Barile \cite[Lemma 2.1]{Bar07}, 
which is a generalization of Theorem \ref{claim:SV_red}. 
 
\begin{thm}
  \label{claim:Bar07_red} 
Let $\mathcal{P} \subset R$ be a finite subset, and put $I= (\mathcal{P})$.  
Let ${\mathcal{P}}_0, {\mathcal{P}}_1, \ldots, {\mathcal{P}}_r$ be 
subsets of $\mathcal{P}$. 
Assume that the following conditions$:$ 
  \begin{enumerate}
  \item[(B1)] $\mathcal{P}= \mathcal{P}_0 \cup \mathcal{P}_1 \cup \cdots \cup  {\mathcal{P}}_r$. 
  \item[(B2)] $\sharp {\mathcal{P}}_0 = 1$. 
  \item[(B3)] For each $\ell$ $(0 < \ell \leq r)$ 
    and for every $a, a'' \in {\mathcal{P}}_{\ell}$ with $a \ne a''$, there exists an integer
    $m \ge 1$ such that 
    $(a a'')^m \in (\mathcal{P}_0 \cup \cdots \cup \mathcal{P}_{\ell - 1})I^{2m-1}$. 
  \end{enumerate}
Set 
\begin{displaymath}
    g_{\ell} = \sum_{a \in {\mathcal{P}}_{\ell}} a, 
    \quad \ell = 0, 1, \ldots, r. 
  \end{displaymath}
  Then $J = (g_0, g_1, \ldots, g_r)$ is a reduction of $I$. 
\end{thm}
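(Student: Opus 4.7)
Setting $K_\ell := (\mathcal{P}_0 \cup \cdots \cup \mathcal{P}_\ell)$ and $J_\ell := (g_0, \ldots, g_\ell)$, so $K_r = I$ and $J_r = J$, the plan is to prove by induction on $\ell$ the statement $(\ast_\ell)$: there exists $s_\ell \geq 0$ with $K_\ell^{s_\ell+1} \subseteq J_\ell I^{s_\ell}$. Applied at $\ell = r$, this gives $I^{s_r+1} \subseteq J I^{s_r}$, making $J$ a reduction of $I$. The base case $\ell = 0$ is immediate: (B2) gives $\mathcal{P}_0 = \{g_0\}$ and hence $K_0 = J_0 = (g_0)$, so $s_0 = 0$ suffices.

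For the inductive step, assume $(\ast_{\ell-1})$ with constant $s_{\ell-1}$ and set $m^* := \max_{a \neq a'' \in \mathcal{P}_\ell} m(a, a'')$; note that (B3) then yields $(aa'')^m \in K_{\ell-1} I^{2m-1}$ for every $m \geq m^*$ and every distinct $a, a'' \in \mathcal{P}_\ell$. I would take a general generator $M = a_1 \cdots a_{s_\ell+1}$ of $K_\ell^{s_\ell+1}$ with $a_i \in \mathcal{P}_0 \cup \cdots \cup \mathcal{P}_\ell$. If at least $s_{\ell-1}+1$ of the factors lie in $\mathcal{P}_0 \cup \cdots \cup \mathcal{P}_{\ell-1}$, then the inductive hypothesis places the corresponding subproduct in $J_{\ell-1} I^{s_{\ell-1}}$, and $M \in J_\ell I^{s_\ell}$. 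Otherwise many of the $a_i$ lie in $\mathcal{P}_\ell$, and two complementary tools apply to the diagonal part $\prod_{a \in \mathcal{P}_\ell} a^{e_a}$: pair reduction via (B3), which rewrites $a^{m^*}(a'')^{m^*}$ as an element of $K_{\ell-1} I^{2m^*-1}$ and so trades two level-$\ell$ factors for one $K_{\ell-1}$-factor; and rotation via $a = g_\ell - \sum_{a'' \in \mathcal{P}_\ell,\, a'' \neq a} a''$, which after binomial expansion rewrites $a^k$ modulo $J_\ell I^{k-1}$ as a sum of products drawn from $\mathcal{P}_\ell \setminus \{a\}$.

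Iterating these two maneuvers drives $M$ either directly into $J_\ell I^{s_\ell}$ or into a sum whose terms each carry at least $s_{\ell-1}+1$ factors of $K_{\ell-1}$, whereupon $(\ast_{\ell-1})$ deposits them in $J_{\ell-1} I^{s_\ell} \subseteq J_\ell I^{s_\ell}$. The main obstacle is that the two tools do not compose monotonically: rotation can reintroduce elements of $\mathcal{P}_\ell$ previously eliminated by pair reduction, while pair reduction requires two distinct $\mathcal{P}_\ell$-elements whose multiplicities in $M$ both exceed $m^*$. I plan to finesse this by choosing $s_\ell$ large enough (polynomially in $s_{\ell-1}$, $m^*$, and $|\mathcal{P}_\ell|$) that a pigeonhole argument on multiplicities guarantees, after any rotation, the existence of a pair of distinct elements of $\mathcal{P}_\ell$ with both multiplicities at least $m^*$, enabling the next pair reduction. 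Careful bookkeeping of how many $K_{\ell-1}$-factors have accumulated at each stage then closes the induction.
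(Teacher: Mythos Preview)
Your outline uses exactly the same ingredients as the paper---induction on $\ell$, the ``rotation'' identity $a = g_\ell - \sum_{a'' \ne a} a''$, and ``pair reduction'' via (B3)---and your inductive framework $K_\ell^{s_\ell+1}\subseteq J_\ell I^{s_\ell}$ is equivalent to the paper's chain of inclusions. So the strategy is right.

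The gap is that the phrase ``iterating these two maneuvers \ldots\ finesse this by choosing $s_\ell$ large enough'' is exactly where the work lies, and you have not done it. Your worry that rotation and pair reduction do not compose monotonically is well founded if one tries to iterate blindly, but the paper shows that no iteration is needed at all, provided one chooses the block size correctly. Specifically: set $c_\ell=|\mathcal P_\ell|$ and work with blocks of exactly $c_\ell m^*$ factors from $\mathcal P_\ell$ (so $s_\ell+1=(s_{\ell-1}+1)\,c_\ell m^*$). In such a block, if two distinct elements each occur with multiplicity $\ge m^*$, apply (B3) once and you land in $K_{\ell-1}I^{c_\ell m^*-1}$. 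Otherwise at most one element, say $a_1$, has multiplicity $\ge m^*$; its multiplicity $d$ then satisfies $d>m^*$. Keep $a_1^{m^*}$ aside and rewrite only $a_1^{\,d-m^*}=(g_\ell-\sum_{i\ge2}a_i)^{d-m^*}$. After expanding, every term not already in $(g_\ell)I^{c_\ell m^*-1}$ has the shape $a_1^{m^*}a_2^{k_2'}\cdots a_{c_\ell}^{k_{c_\ell}'}$ with $k_2'+\cdots+k_{c_\ell}'=(c_\ell-1)m^*$; by pigeonhole some $k_j'\ge m^*$, so now $a_1$ and $a_j$ both have multiplicity $\ge m^*$ and a single application of (B3) finishes. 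Thus one rotation followed by one pair reduction disposes of each block; your feared back-and-forth never occurs.

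In short, the missing idea is the precise block size $c_\ell m^*$ together with the ``partial'' rotation that retains $a_1^{m^*}$: this converts your vague pigeonhole plan into an exact count and eliminates the need for any careful bookkeeping or iteration.
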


\begin{remark}
  \label{rmk:Diff_Bar07}
  The difference between Theorem \ref{claim:Bar07_red} and the original result 
  of Barile \cite{Bar07} is in the condition (B3). 
  The condition of the original result corresponding to (B3) is 
  \begin{enumerate}
  \item[(B3)'] For each $\ell$ $(0 < \ell \leq r)$ 
    and for every $a, a'' \in {\mathcal{P}}_{\ell}$ with $a \ne a''$, there exists 
    an integer $m \ge 1$ such that 
    $(a a'')^{m} \in (\mathcal{P}_0 \cup \cdots \cup {\mathcal{P}}_{\ell-1})$. 
  \end{enumerate}
\end{remark}

\begin{proof}[Proof of Theorem $\ref{claim:Bar07_red}$]
Since $J \subseteq I$, it suffices to show $I^{s+1} \subset JI^s$. 
In order to do that, 
we set $\sharp {\mathcal{P}}_{\ell} = c_{\ell}$ and 
$I_{\ell} = (\mathcal{P}_0 \cup \cdots \cup \mathcal{P}_{\ell})$  
for each $\ell =0,1,\ldots,r$.
Moreover, for each $\ell$, 
we can choose an integer $m_{\ell} \ge 1$ such that 
\[
(a a'')^{m_{\ell}} \in I_{\ell - 1} I^{2m_{\ell} - 1}
\] 
for all $a, a'' \in {\mathcal{P}}_{\ell}$ with $a \neq a''$ by assumption.  
Then it is enough to prove 
  \begin{equation}
    \label{eq:Bar07_ETS}
    I_{j}^{c_1 \cdots c_j m_1 \cdots m_{j}} 
    \subset I_{j-1}^{c_1 \cdots c_{j-1} m_1 \cdots m_{j-1}} 
            I^{(c_1 \cdots c_{j - 1} m_1 \cdots m_{j-1})
                 (c_{j} m_{j} - 1)} 
            + J I^{c_1 \cdots c_{j} m_1 \cdots m_{j} - 1}
  \end{equation}
for each $j = 0, 1, \ldots, r$. 

\par
The case of $j = 0$ is clear because 
$I_0 = ({\mathcal{P}}_0) = (g_0) \subset J$. 

\par
Now suppose $j = \ell \ge 1$ and assume that (\ref{eq:Bar07_ETS}) holds 
for every $j \le \ell-1$. 
To prove (\ref{eq:Bar07_ETS}) for $j=\ell$, it is enough to show that  
for arbitrary $c_1 \cdots c_{\ell} m_1 \cdots m_{\ell}$ elements 
(to take the same elements is allowed) 
in $\mathcal{P}_0 \cup \cdots \cup \mathcal{P}_{\ell}$, 
the product of all elements is contained in the right hand side of (\ref{eq:Bar07_ETS}). 
We divide these elements into  
$c_1 \cdots c_{\ell - 1} m_1 \cdots m_{\ell - 1}$ sequences of 
$c_{\ell} m_{\ell}$ elements, and show that the product of the elements 
in each sequence is in 
$I_{\ell - 1} I^{c_{\ell} m_{\ell} - 1} + J I^{c_{\ell} m_{\ell} - 1}$. 
\par 
In what follows, we discuss about only one sequence. 
If there exists an element of 
$\mathcal{P}_0 \cup \cdots \cup \mathcal{P}_{\ell-1}$ 
in the sequence, then it is clear that the product 
is in $I_{\ell - 1} I^{c_{\ell} m_{\ell} -1}$. 
Therefore, we may assume that all elements in the sequence are in ${\mathcal{P}}_{\ell}$. 
If we can find a pair $(a, a'')$ with $a \ne a''$ which appear
at least $m_{\ell}$ times in this sequence, then the assumption (B3) 
yields that the product of all elements in the sequence is contained in 
$I_{\ell - 1} I^{c_{\ell} m_{\ell} - 1}$. 
Otherwise, we pick up an element $a_1$ the number of times (say, $d$) 
which appears in the sequence is maximal. 
Note that $d > m_{\ell}$. 
Let ${\mathcal{P}}_{\ell} = \{ a_1, a_2, \ldots, a_{c_{\ell}} \}$. 
Then the product of all elements in the sequence is 
  \begin{displaymath}
    \begin{aligned}
      a_1^d a_2^{k_2} \cdots a_{c_{\ell}}^{k_{c_{\ell}}} 
      = &a_1^{m_{\ell}} a_1^{d - m_{\ell}} 
         a_2^{k_2} \cdots a_{c_{\ell}}^{k_{c_{\ell}}} \\
      = &a_1^{m_{\ell}} 
         \bigg( g_{\ell} - \sum_{i=2}^{c_{\ell}} a_i \bigg)^{d - m_{\ell}}
         a_2^{k_2} \cdots a_{c_{\ell}}^{k_{c_{\ell}}} \\
      = &g_{\ell} \cdot (\text{the products of $c_{\ell} m_{\ell} - 1$ elements 
                         of ${\mathcal{P}}_{\ell}$}) \\
        &+ a_1^{m_{\ell}} 
           \bigg( \sum_{i=2}^{c_{\ell}} a_i \bigg)^{d - m_{\ell}} 
           a_2^{k_2} \cdots a_{c_{\ell}}^{k_{c_{\ell}}} \\
      = &g_{\ell} \cdot (\text{the products of $c_{\ell} m_{\ell} - 1$ elements 
                         of ${\mathcal{P}}_{\ell}$}) \\
        &+ \sum a_1^{m_{\ell}} 
           a_2^{k_2'} \cdots a_{c_{\ell}}^{k_{c_{\ell}}'},
    \end{aligned}
  \end{displaymath}
where $k_2 + \ldots + k_{c_{\ell}} = c_{\ell}m_{\ell} - d$ and 
$k_2' + \ldots + k'_{c_{\ell}} = (c_{\ell} -1) m_{\ell}$. 
Then there exists an integer $j$ with $2 \leq j \le c_{\ell}$ such that 
$k_j' \geq m_{\ell}$.
By a similar argument as above, the right-hand side is contained in 
$JI^{c_{\ell}m_{\ell}-1}+I_{\ell-1}I^{c_{\ell} m_{\ell}-1}$. 
Hence we have finished the proof. 
\end{proof}

\begin{proof}[Proof of Theorem $\ref{claim:SV_red}$]
Assume that $I$ satisfies (SV1),(SV2), and (SV3). 
Then it also satisfies (B1), (B2) and (B3). 
Hence the assertion immediately follows from Theorem \ref{claim:Bar07_red}. 
\end{proof}

\par \vspace{2mm}
In the proof of the following two examples, we need Theorem \ref{claim:Bar07_red} 
instead of Theorem \ref{claim:SV_red}.
 
\begin{exam}
\label{claim:hyper_exam}
Let $K$ be a field, and let $m \ge 2$ be an integer. 
Consider the hypersurface $R=K[[x,y,z]]/(x^my^m-z^{2m})$. 
For an ideal $I=(x,y,z)R$, we put 
\[
\mathcal{P}_0=\{z\},\quad \mathcal{P}_1 = \{x,y\}. 
\]  
Then since $(xy)^m = z \cdot z^{2m-1} \in (\mathcal{P}_0) I^{2m-1}$, 
we can conclude that $x+y,z$ is a (minimal) reduction by virtue of  
Theorem \ref{claim:Bar07_red}. 
But we cannot apply Theorem \ref{claim:SV_red} to this case 
because $xy \notin (z)$. 
\end{exam}

\begin{exam}
\label{claim:Binom_exam}
Let $R=K[x_1,x_2,x_3,x_4,x_5,x_6]$ be a polynomial ring over a field $K$. 
For an ideal 
\[
I=(x_1x_2+x_1x_3,\, x_1x_4,\, x_1x_5, \, x_1x_6,x_2x_5,\, x_2x_6,\, x_3x_4,\, x_3x_6,\, x_4x_5,\, x_4x_6,\,x_5x_6), 
\]
we put 
\[
\begin{array}{rclcrcl}
\mathcal{P}_0 &= & \{x_1x_6\}, & \qquad &
\mathcal{P}_1 &=& \{x_1x_5,\ x_2x_6\}, \\
\mathcal{P}_2 &=& \{x_1x_4,\ x_3x_6\}, & \qquad & 
\mathcal{P}_3 &=& \{x_2x_5,\ x_4x_6\}, \\
\mathcal{P}_4 &=& \{x_3x_4,\ x_5x_6\}, & \qquad & 
\mathcal{P}_5 &=& \{x_1x_2+x_1x_3,\ x_4x_5\}. 
\end{array}
\]
Then we can conclude that 
\[
 J = (x_1x_6,\, x_1x_5+x_2x_6, \, x_1x_4+x_3x_6,\, x_2x_5+x_4x_6,\, x_3x_4+x_5x_6,\, x_1x_2+x_1x_3+x_4x_5)
\]
is a (minimal) reduction of $I$ by Theorem \ref{claim:Bar07_red}. 
But we cannot apply Theorem \ref{claim:SV_red} because the product of 
$(x_1x_2+x_1x_3) \in \mathcal{P}_5$ and $x_4x_5 \in \mathcal{P}_5$ 
is not contained in the ideal $(a')$ for any element 
$a' \in \mathcal{P}_0 \cup \cdots \cup \mathcal{P}_4$.  
\end{exam}

\bigskip

\par
Next, we refine the result by Barile \cite[Proposition 1.1]{Bar96}
\begin{thm}
  \label{claim:Bar96_red}
Assume that $R$ is a local ring. 
Let $\mathcal{P} \subset R$ be a finite subset, and  
let ${\mathcal{P}}_0, {\mathcal{P}}_1, \ldots, {\mathcal{P}}_r$ be 
subsets of $\mathcal{P}$. 
We set $\sharp {\mathcal{P}}_{\ell} = c_{\ell}$ for all $\ell$ and $I=({\mathcal{P}})$. 
Assume that the following conditions are satisfied$:$ 
  \begin{enumerate}
  \item[(Ba1)] $\mathcal{P} = \mathcal{P}_0 \cup \mathcal{P}_1 \cup \cdots \cup {\mathcal{P}}_{r}$. 
  \item[(Ba2)] $\sharp {\mathcal{P}}_0 = 1$. 
  \item[(Ba3)] For each $\ell$ $(0 < \ell \leq r)$ with $c_{\ell} \geq 2$, 
    there exists an integer $n_{\ell}$ with $2 \leq n_{\ell} \leq c_{\ell}$ 
    such that 
    for arbitrary $n_{\ell}$ distinct elements 
    $p_1, p_2, \ldots, p_{n_{\ell}} \in {\mathcal{P}}_{\ell}$, there exist 
    an integer ${\ell}'$ with $0 \leq {\ell}' < \ell$, elements $p' \in {\mathcal{P}}_{{\ell}'}$ 
    and $b \in I^{n_{\ell} - 1}$ 
    such that $p_1 p_2 \cdots p_{n_{\ell}} = p' b$. 
  \end{enumerate}
  For $0 \leq \ell \leq r$ with $c_{\ell} = 1$, we set $n_{\ell} = 2$. 
  For each $\ell = 0, 1, \ldots, r$, let 
  $A^{({\ell})} = (a_{ij}^{({\ell})})$ be an $(n_{\ell} - 1) \times c_{\ell}$ 
  matrix with $a_{ij}^{({\ell})} \in R$. 
  Assume that all maximal minors of $A^{({\ell})}$ are unit in $R$. 
  Set 
  \begin{displaymath}
    \begin{aligned}
      {\mathcal{P}}_{\ell} 
      &= \{ p_1^{({\ell})}, p_2^{({\ell})}, 
            \ldots, p_{c_{\ell}}^{({\ell})} \}, 
         \quad 0 \leq \ell \leq r, \\
      g_i^{({\ell})} 
      &= \sum_{j=1}^{c_{\ell}} a_{ij}^{({\ell})} p_j^{({\ell})}, 
         \quad 1 \leq i \leq n_{\ell} - 1, \quad 0 \leq \ell \leq r, \\
      J &= (g_i^{({\ell})} 
         \; : \; 0 \leq \ell \leq r, \  1 \leq i \leq n_{\ell} - 1). 
    \end{aligned}
  \end{displaymath}
  Then $J$ is a reduction of $I$. 
\end{thm}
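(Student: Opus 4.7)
The plan is to show $J$ is a reduction of $I$ by establishing $I^{s+1} \subseteq J I^s$ for some $s \geq 1$. Setting $I_\ell := (\mathcal{P}_0 \cup \cdots \cup \mathcal{P}_\ell)$, I would proceed by induction on $\ell \in \{0, 1, \ldots, r\}$, proving that there exists an integer $N_\ell \geq 0$ with $I_\ell^{N_\ell + 1} \subseteq J I^{N_\ell}$. The case $\ell = r$ then yields the reduction property since $I_r = I$. The base case $\ell = 0$ is immediate: $c_0 = 1$ forces $n_0 = 2$, so $A^{(0)}$ is a $1 \times 1$ matrix whose unique maximal minor $a_{11}^{(0)}$ is a unit, giving $p_1^{(0)} = (a_{11}^{(0)})^{-1} g_1^{(0)} \in J$ and $N_0 = 0$.

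For the inductive step, the key combinatorial lemma I would establish is: for every $N \geq n_\ell$ and every product $\pi = q_1 \cdots q_N$ with $q_i \in \mathcal{P}_0 \cup \cdots \cup \mathcal{P}_\ell$, one has $\pi \in J I^{N-1} + I_{\ell-1} I^{N-1}$. If some $q_i$ lies in a lower level $\mathcal{P}_{\ell'}$ with $\ell' < \ell$, the conclusion is immediate. Otherwise all $q_i \in \mathcal{P}_\ell$; let $T \subseteq \{1, \ldots, c_\ell\}$ be the set of indices $j$ for which $p_j^{(\ell)}$ appears in $\pi$. When $|T| \geq n_\ell$, condition (Ba3) applied to any $n_\ell$ distinct factors yields $\pi \in I_{\ell-1} I^{N-1}$. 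The remaining subcase $|T| \leq n_\ell - 1$ is the main technical obstacle, and it is exactly where the matrix hypothesis must be exploited.

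In that subcase, I would pad $T$ to an index set $S$ of size $n_\ell - 1$. Since $\det A_S^{(\ell)}$ is a unit, solving the linear system $A^{(\ell)} \vec{p}^{(\ell)} = \vec{g}^{(\ell)}$ for the entries $p_j^{(\ell)}$ with $j \in S$ expresses each such $p_j^{(\ell)}$ as
\[
p_j^{(\ell)} = \sum_{i=1}^{n_\ell - 1} \alpha_{ji} \, g_i^{(\ell)} + \sum_{k \notin S} \beta_{jk} \, p_k^{(\ell)}
\]
for some $\alpha_{ji}, \beta_{jk} \in R$. Because $N \geq n_\ell > |T|$, pigeonhole supplies some $j_0 \in T$ for which $p_{j_0}^{(\ell)}$ occurs at least twice in $\pi$; substituting one occurrence using the formula above decomposes $\pi$ into a piece lying in $J I^{N-1}$ plus new products of the form $p_k^{(\ell)} \pi'$, where $k \notin S \supseteq T$ and $\pi'$ denotes $\pi$ with one factor of $p_{j_0}^{(\ell)}$ deleted. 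Since $p_{j_0}^{(\ell)}$ still appears in $\pi'$, the set of distinct indices in $p_k^{(\ell)} \pi'$ is $T \cup \{k\}$, strictly larger than $T$. Iterating at most $n_\ell - |T|$ times reduces every summand to either $J I^{N-1}$ or the case $|T'| \geq n_\ell$ treated above.

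To close the induction, I would raise the resulting inclusion $I_\ell^N \subseteq J I^{N-1} + I_{\ell-1} I^{N-1}$ (with $N := n_\ell$) to the $M$-th power, where $M := N_{\ell-1} + 1$. Set $A := J I^{N-1}$ and $B := I_{\ell-1} I^{N-1}$. Using $J, I_{\ell-1} \subseteq I$, a direct exponent count shows that every mixed term $A^k B^{M-k}$ with $k \geq 1$ in the expansion of $(A+B)^M$ lies in $J I^{NM - 1}$; for the remaining summand $B^M = I_{\ell-1}^M I^{(N-1)M}$, the inductive hypothesis $I_{\ell-1}^{N_{\ell-1}+1} = I_{\ell-1}^M \subseteq J I^{N_{\ell-1}}$ combined with the identity $N_{\ell-1} + (N-1)M = NM - 1$ places $B^M$ inside $J I^{NM - 1}$ as well. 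Hence $I_\ell^{NM} \subseteq J I^{NM-1}$, and $N_\ell := NM - 1$ closes the induction.
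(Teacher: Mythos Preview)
Your proof is correct and follows essentially the same strategy as the paper's: filter by $I_\ell=(\mathcal{P}_0\cup\cdots\cup\mathcal{P}_\ell)$, show that any product of $n_\ell$ elements from $\mathcal{P}_0\cup\cdots\cup\mathcal{P}_\ell$ lies in $I_{\ell-1}I^{n_\ell-1}+JI^{n_\ell-1}$ by using the unit-minor hypothesis to ``spread out'' a product supported on too few $p_j^{(\ell)}$'s, and then chain these inclusions down to $\ell=0$.

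The only differences are in bookkeeping. In the inner argument the paper tracks $t(\mu)=\sharp\{i:k_i=1\}$ and does descending induction (row-reducing $A^{(\ell)}$ once to put an identity block in the first $n_\ell-1$ columns, then replacing $p_{t+1}^{k_{t+1}-1}$ by $(g'_{t+1}-\sum b_{t+1,j}p_j)^{k_{t+1}-1}$), whereas you track $|T|$, the number of distinct indices, and iterate upward, re-solving the $(n_\ell-1)\times(n_\ell-1)$ system on a padding $S\supseteq T$ at each step and substituting a single factor. For the outer chaining, the paper writes the explicit recursion $I_j^{n_0\cdots n_j}\subseteq I_{j-1}^{n_0\cdots n_{j-1}}I^{(n_0\cdots n_{j-1})(n_j-1)}+JI^{n_0\cdots n_j-1}$, while you raise $I_\ell^{n_\ell}\subseteq JI^{n_\ell-1}+I_{\ell-1}I^{n_\ell-1}$ to the $(N_{\ell-1}+1)$-st power; your exponent $N_r+1=n_1\cdots n_r$ is in fact slightly sharper than the paper's $n_0n_1\cdots n_r$. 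These are stylistic variants of the same idea.
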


\begin{remark}
  \label{rmk:Diff_Bar96}
  The difference between Theorem \ref{claim:Bar96_red} and the original result 
  of Barile \cite{Bar96} is in the condition (Ba3). 
  The condition of the original result corresponding to (Ba3) is 
  \begin{enumerate}
  \item[(Ba3)'] For each $\ell$ $(0 < \ell \leq r)$ with $c_{\ell} \geq 2$, 
    there exists some integer $n_{\ell}$, $2 \leq n_{\ell} \leq c_{\ell}$ 
    such that 
    for arbitrary $n_{\ell}$ distinct elements 
    $p_1, p_2, \ldots, p_{n_{\ell}} \in {\mathcal{P}}_{\ell}$, there exist 
    ${\ell}'$ with $0 \leq {\ell}' < \ell$ and 
    $p' \in {\mathcal{P}}_{{\ell}'}$, 
    such that $p_1 p_2 \cdots p_{n_{\ell}} \in (p')$. 
  \end{enumerate}
\end{remark}

\begin{proof}[Proof of Theorem $\ref{claim:Bar96_red}$]
It is enough to show $I^{s+1} \subset JI^s$ for some $s \ge 0$. 
\par
For each $\ell = 0, 1, \ldots, r$, 
we set $I_{\ell} = (\mathcal{P}_0 \cup \cdots \cup \mathcal{P}_{\ell})$. 
Then it is enough to prove 
  \begin{equation}
    \label{eq:Bar96_ETS}
    I_{j}^{n_0 n_1 \cdots n_{j}} 
    \subset I_{j-1}^{n_0 n_1 \cdots n_{j-1}} 
            I^{(n_0 n_1 \cdots n_{j -1})
                 (n_{j} - 1)} 
            + J I^{n_0 n_1 \cdots n_{j} - 1}
  \end{equation}
for each $j = 0, 1, \ldots, r$. 

\par
The case of $j = 0$ is clear because $p_0 = g_0 \in J$ 
by the assumption (Ba2). 

\par
Now suppose $j=\ell \geq 1$ and assume that (\ref{eq:Bar96_ETS}) holds 
for every $j \le \ell-1$.  
In order to prove (\ref{eq:Bar96_ETS}) for $j=\ell$, 
it is enough to show that  
for arbitrary $n_0 n_1 \cdots n_{\ell}$ elements 
(to take the same elements is allowed) 
in $\mathcal{P}_0 \cup \cdots \cup \mathcal{P}_{\ell}$, 
the product of these elements is contained in the right hand side 
of (\ref{eq:Bar96_ETS}). 
We divide these elements into  
$n_0 n_1 \cdots n_{\ell - 1}$ sequences of 
$n_{\ell}$ elements, and show that the product of all elements 
in each sequence is contained in 
$I_{\ell - 1} I^{n_{\ell} - 1} + J I^{n_{\ell} - 1}$. 

\par   
In what follows, we discuss about only one sequence. 
If there exists an element of $\mathcal{P}_0 \cup \cdots \cup \mathcal{P}_{\ell-1}$ 
in the sequence, then it is clear that the product 
is contained in $I_{\ell - 1} I^{n_{\ell} -1}$. 
Therefore, we may assume that all elements in the sequence belong to 
${\mathcal{P}}_{\ell}$. 

\par
In the following, we omit the symbol $\ell$ for simplicity. 
Consider the product 
  \begin{displaymath}
    \mu = p_1^{k_1} p_2^{k_2} \cdots p_c^{k_c}, 
    \quad k_1 + k_2 + \cdots + k_c = n, 
    \quad k_i \geq 0 
  \end{displaymath}
and set 
  \begin{displaymath}
    t := t ({\mu}) := \sharp \{ i \; : \; k_i = 1 \}. 
  \end{displaymath}
We prove $\mu \in I_{\ell - 1} I^{n_{\ell} -1}$ 
by descending induction on $t$ ($0 \leq t \leq n$). 

\par
If $t=n$, then $\mu$ is a product of distinct $n$ elements 
in ${\mathcal{P}}_{\ell}$. 
It follows that 
$\mu \in I_{\ell - 1}I^{n_{\ell} - 1}$ by the assumption (Ba3). 

\par
Now we consider the case where $0 \leq t \leq n-1$. 
Then we can assume without loss of generality that 
$k_1 = k_2 = \cdots = k_t =1$ and $k_i \geq 2$ for any $i > t$. 
Notice that $t \le n-2$. 
Let $A'$ be the $(n-1) \times (n-1)$ submatrix of $A$ 
consists of first $n-1$ columns of $A$. 
By assumption, $A'$ is invertible. 
Since $R$ is local, we may assume that it is possible 
to transform the matrix $A$ 
to the matrix $B = (b_{ij})$ having the same size as $A$ 
with $b_{ij} = {\delta}_{ij}$ for $1 \leq i \leq n-1$,  
$1 \leq j \leq n-1$ by elementary row operations. 
Then we put
  \begin{displaymath}
    g_{t+1}' = p_{t+1} + \sum_{j=t+2}^{c} b_{t+1 j} p_j \in J.
  \end{displaymath}
Since $k_{t+1} \geq 2$, we have 
  \begin{displaymath}
    \begin{aligned}
      \mu &= p_1 p_2 \cdots p_t p_{t+1} 
         \bigg(g_{t+1}' - \sum_{j=t+2}^{c} b_{t+1 j} p_j \bigg)^{k_{t+1} - 1}
             p_{t+2}^{k_{t+2}} \cdots p_n^{k_n} \\
          &= g_{t+1}' (\text{an element of $I^{n-1}$}) 
           + p_1 p_2 \cdots p_t p_{t+1} 
         \bigg(- \sum_{j=t+2}^{c} b_{t+1 j} p_j \bigg)^{k_{t+1} - 1}
             p_{t+2}^{k_{t+2}} \cdots p_n^{k_n} \\
          &= (\text{an element of $JI^{n-1}$}) 
           + \sum (\text{an element of $R$}) \cdot
             p_1 p_2 \cdots p_t p_{t+1} \,
             p_{t+2}^{k_{t+2}'} \cdots p_n^{k_n'}, 
    \end{aligned}
  \end{displaymath}
where 
  \begin{displaymath}
    t+1 + k_{t+2}' + \cdots + k_{n}' 
    = t + k_{t+1} +k_{t+2} + \cdots + k_n = n. 
  \end{displaymath}
Then the induction hypothesis implies that the second term in the last 
equation is contained in $I_{\ell - 1} I^{n-1} + J I^{n-1}$. 
This completes the proof. 
\end{proof}

\par
In the next example, the analytic spread of $I$ is known, 
but we can provide a concrete minimal reduction using Theorem \ref{claim:Bar96_red}.  

\begin{exam}
  \label{exam:even_cycle}
Let $r \ge 2$ be an integer. 
Set $I=(x_1 x_2, x_2 x_3, \ldots, x_{2r-1} x_{2r}, x_{2r} x_1)$,  
the edge ideal of the $2r$-cycle $(r \geq 2)$. 
Put  
\begin{displaymath}
    \begin{aligned}
      &{\mathcal{P}}_{\ell} = \{ x_{2 \ell + 1} x_{2 \ell + 2} \}, 
       \quad \ell = 0, 1, \ldots, s-1, \\
      &{\mathcal{P}}_r = \{ x_2 x_3, x_4 x_5, \ldots, 
                            x_{2r- 2} x_{2r- 1}, x_{2r} x_1 \}. 
    \end{aligned}
\end{displaymath}
Then the assumptions of Theorem \ref{claim:Bar96_red} are satisfied 
with $n_{\ell} = 2$ for 
$\ell = 0, 1, \ldots, r-1$ and $n_r = r$. 
Moreover, since all maximal minors of the matrix 
{\small
\[
 A^{(r)} = \left(
\begin{array}{ccccc}
1 &   &       &   & 1 \\
  & 1 &       &   & 1 \\
  &   & \ddots &   & \vdots \\
  &   &       & 1 & 1   
\end{array}
\right)  
\]} 
are unit in $R$, we obtain that  
\[
x_1x_2,\,x_3x_4,\ldots,x_{2r-1}x_{2r},\,x_2x_3+x_{2r}x_1,\,x_4x_5+x_{2r}x_1,\,
\ldots,x_{2r-2}x_{2r-1}+x_{2r}x_{1}
\]
is a reduction of $I$ by Theorem \ref{claim:Bar96_red}. 
\par 
On the other hand, we have $\ell(I)=2r-1$ due to Vasconcelos \cite{Vas}  
because any $2r$-cycle is a bipartite graph.  
In particular, the above reduction is a minimal reduction of $I$. 
\end{exam}

\par 
In the following example, we cannot apply the above theorem, but 
we can find a minimal reduction by a similar argument as in the proof.

\begin{exam}
Let $R=K[x_1,x_2,x_3,x_4,x_5]$ be a polynomial ring over an infinite field $K$, 
and let $a,b,c,d \in K \setminus \{0\}$ be distinct elements with each other. 
Let $I$ be the edge ideal of the complete graph $K_5$, that is, 
$I$ is the ideal generated by the following squarefree monomials of degree $2$:
\[
x_1x_2,\,x_1x_3,\,x_1x_4,\,x_1x_5,\,x_2x_3,\,x_2x_4,\,x_2x_5,\,
x_3x_4,\,x_3x_5,\,x_4x_5.
\]
Set 
\[
\begin{array}{rclcrcl}
\mathcal{P}_0 & = & \{x_1x_2\}, & \qquad &
\mathcal{P}_1 & = & \{x_2x_3,\,x_4x_5 \}, \\
\mathcal{P}_2 & = & \{x_3x_4,\,x_1x_5 \}, & \qquad &
\mathcal{P}_3 & = & \{x_1x_3,\,x_1x_4,\,x_2x_4,\,x_2x_5,\,x_3x_5 \},  
\end{array}
\]
and $I_{\ell} = (\mathcal{P}_0 \cup \cdots \cup \mathcal{P}_{\ell})$ for 
each $\ell = 0,1,2$. 
If we put 
\[
\begin{array}{rcl}
g_0 &=& x_1x_2,\\ 
g_1 &=& x_2x_3+x_4x_5,\\ 
g_2 &=& x_3x_4+x_1x_5,\\ 
g_3 &=& x_1x_3+a x_1x_4+b x_2x_4+ c x_2x_5+d x_3x_5, \\
g_4 &=& x_1x_3+a^2 x_1x_4+b^2 x_2x_4+c^2x_2x_5+d^2 x_3x_5,
\end{array}
\] 
then $J=(g_0,g_1,g_2,g_3,g_4)$ is a (minimal) 
reduction of $I$ by a similar argument as in the proof 
of Theorem \ref{claim:Bar96_red}. 
Indeed, we note that $I_2^3 \subseteq (g_0,g_1,g_2)I^2$.  
\end{exam}

\begin{acknowledgement}
  The first-named author was supported by JST, CREST.
The second-named author was supported by JSPS 20540047. 
The third-named author was supported by JSPS 19340005. 
\end{acknowledgement}

\bibliographystyle{amsplain}

\end{document}